

\documentclass[a4paper, 10pt, conference]{ieeeconf}      

\IEEEoverridecommandlockouts                              

\overrideIEEEmargins                                      


\usepackage{amsmath} 
\usepackage{amssymb}  

\title{\LARGE \bf
Partial  Realization Theory and System Identification Redux
}

\author{Anders Lindquist$^{1}$
\thanks{$^{1}$Anders Lindquist is with Department of Automation, Shanghai Jiao Tong University, China, and the Royal Institute of Technology, Stockholm, Sweden
        {\tt\small alq@kth.se}}%
}

\newtheorem{theorem}{Theorem}

\newtheorem{proposition}[theorem]{Proposition}
\newtheorem{lemma}[theorem]{Lemma}

\begin{document}

\maketitle
\thispagestyle{empty}
\pagestyle{empty}

\begin{abstract}

Some twenty years ago 
we introduced a nonstandard matrix Riccati equation to solve the partial stochastic realization problem. In this paper we provide a new derivation of this equation in the context of system identification. This allows us to show that the nonstandard matrix Riccati equation is universal in the sense that it can be used to solve more general analytic interpolation problems by only changing certain parameters. Such interpolation problems are ubiquitous in systems and control. In this context we also discuss a question posed by R.E. Kalman in beginning of the 1970s.

\end{abstract}

\section{INTRODUCTION}

A classical basic problem in system identification is to estimate the (unknown) constant coefficient matrices $A,B,C,D$ in a stable linear (SISO) stochastic system 
\begin{equation}\label{syst}
 \begin{cases}
    x(t+1)=Ax(t)+Bu(t)\\
    \phantom{+1)} y(t)=Cx(t)+Du(t) 
 \end{cases},
\end{equation}
driven by white noise $\{ u(t)\}_{t\in\Bbb{Z}}$, from a record  of observations
\begin{equation}
\label{record}
y_0,y_1,y_2,\dots,y_N
\end{equation}
of the output process $\{ y(t)\}_{t\in\Bbb{Z}}$, which is stationary in steady state. Modulo possible unobservable and/or unreachable modes, choice of coordinates and placement of zeros, this is equivalent to finding a shaping filter 
\[
\text{white noise}\stackrel{u}\negmedspace{\longrightarrow}\fbox{$w(z)$}\negmedspace\stackrel{y}
{\longrightarrow} 
\]
with a minimum-phase transfer function
\begin{equation}
\label{w}
w(z)=\rho\frac{\sigma(z)}{a(z)},
\end{equation}
where 
\begin{align}
    \sigma(z)&=z^n+\sigma_1z^{n-1}+\dots +\sigma_n \label{sigma}  \\
    a(z)&=z^n+a_1z^{n-1}+\dots +a_n \label{a}
\end{align}
are {\em Schur polynomials}, i.e., polynomials with all its roots in the open unit disc, and $\rho$ is a positive number. Then the rational function 
\begin{equation}
\label{f}
f(z)=\frac12\frac{b(z)}{a(z)}
\end{equation}
satisfying 
\begin{equation}
\label{Ref}
\text{Re}\{f(e^{i\theta})\}=|w(e^{i\theta})|^2
\end{equation} 
is positive real (see, e.g., \cite{LPbook}), and a simple calculation shows that
\begin{equation}
\label{b}
b(z)=z^n+b_1z^{n-1}+\dots +b_n
\end{equation} 
is a Schur polynomial whose coefficients can be determined from the linear system of equations corresponding to the relation
\begin{equation}
\label{absigma}
a(z)b(z^{-1})+b(z)a(z^{-1})=2\rho^2\sigma(z)\sigma(z^{-1}).
\end{equation}
In fact, \eqref{Ref} is equivalent to 
\begin{equation}
\label{fwPhi}
f(z)+f(z^{-1})=\rho^2w(z)w(z^{-1}) =:\Phi(z),
\end{equation} 
where 
\begin{equation}
\label{Phi}
\Phi(z)=\sum_{k=-\infty}^\infty c_k z^{-k}
\end{equation}
is the power spectral density of the stationary output process $y$ and 
\begin{displaymath}
c_k =\mathbb{E}\{y(t+k)y(t)\}
\end{displaymath}
are the covariance lags \cite{LPbook}. Then, 
\begin{equation}
\label{fexpansion}
f(z)=\tfrac12 + c_1z^{-1}+ c_2z^{-2}+ \dots
\end{equation}
is analytic in the complement of the unit disc in the complex plane and maps to the right half plane, estabishing the positive-real property.

Now, if we had an infinite observation record \eqref{record}, i.e, $N=\infty$, then we would have an infinite sequence $(c_0,c_1,c_2,\dots)$ of covariance lags that could be determined from the ergodic limits
\begin{displaymath}
c_k = \lim_{N\to\infty}\frac{1}{N+1}\sum_{t=0}^{N-k}y_{t+k}y_t ;
\end{displaymath}
see, e.g., \cite{LPbook}. Then, identifying coefficients of powers of $z$ in $b(z)=2f(z)a(z)$ as done in \cite{BLpartial97}, we obtain
\begin{equation}\label{c2b}
\begin{bmatrix}
b_1\\b_2\\ \vdots\\b_d
\end{bmatrix}
=2\begin{bmatrix}
c_1\\c_2\\ \vdots\\c_d
\end{bmatrix}
+
\begin{bmatrix}
1&&&\\2c_1&1&&\\ \vdots&\vdots&&\\
2c_{d-1}&2c_{d-2}&\dots&1
\end{bmatrix}
\begin{bmatrix}
a_1\\a_2\\ \vdots\\a_d
\end{bmatrix}
\end{equation}
for  nonnegative powers and 
\begin{equation}\label{Hankel} 
\begin{bmatrix} c_1& c_2& \cdots &c_{d}\\ 
c_2& c_3 &\cdots & c_{d+1}\\
\vdots &\vdots& \ddots& \vdots\\
 c_{d}&c_{d+1}&\cdots&c_{2d-1}\end{bmatrix}
\begin{bmatrix}
a_1\\a_2\\ \vdots\\a_d
\end{bmatrix}=
-\begin{bmatrix}
c_{d+1}\\c_{n+2}\\ \vdots\\c_{2d}
\end{bmatrix}
\end{equation}
for negative powers. The coefficient matrix in \eqref{Hankel} is a Hankel matrix that we denote $H_d$. By Kronecker's theorem \cite{Kalmanbook}, 
\begin{equation}
\label{Kronecker}
d:= \deg f(z) =\text{rank}\, H_\infty = \text{rank}\, H_d.
\end{equation}
Hence, $f(z)$, and hence also $w(z)$, can be determined from finite sequence $(c_0,c_1,\dots,c_n)$ of covariance lags, where  
$n:=2d$.  

Therefore it may seem that only a finite covariance record $(c_0,c_1,\dots,c_n)$ is needed, and this also a basic assumption in the early work on so-called {\em subspace identification} \cite{Aoki,OverscheeDeMoor93,OverscheeDeMoor96}, where in general the biased ergodic estimates 
\begin{displaymath}
c_k=\frac{1}{N+1}\sum_{t=0}^{N-k}y_{t+k}y_t 
\end{displaymath}
were used to insure that the corresponding Toeplitz matrix is positive definite, as required. However, as we pointed out in \cite{LP96}, this is incorrect and may lead to an $f(z)$ that is not positive real; also see \cite[Chapter 13]{LPbook}. This is due to the possible difference between {\em algebraic degree\/} and {\em positive degree\/} (Section~\ref{sec:degree}). The classical subspace identification procedures were based on solving the {\em deterministic partial realization problem\/} \cite{Kalmanbook,GraggL} rather than the stochastic one, namely the {\em rational covariance extension problem}, which we shall return to in Section~\ref{sec:background}. 

The focus of this paper will be on a certain nonstandard Riccati equation, called the {\em Covariance Extension Equation (CEE)}, which solves the rational covariance extension problem. It will be introduced in Section~\ref{sec:CEE}. Section~\ref{sec:background} as a whole is devoted to background material for the paper. In addition, in Section~\ref{sec:Kalman} we consider a question posed by Kalman and give a partial answer. In Section~\ref{sec:new} we provide a new derivation of CEE that will allow us to establish that versions of CEE can be used to solve more general analytic interpolation problems encountered in systems and control. Such an example will be given in Section~\ref{sec:NP}. We conclude with Section~\ref{sec:conclusions}, where we some future directions of research are discussed.

\section{BACKGROUND}\label{sec:background}

\subsection{Rational covariance Extension problem}

The rational covariance extension problem, first formulated by Kalman in \cite{KalmanToeplitz}, can be stated in the following way. Given a positive covariance sequence $(c_0,c_1,\dots,c_n)$, i.e., a sequence $(c_0,c_1,\dots,c_n)$ with the property that the Toeplitz matrix
\begin{displaymath}
T=\begin{bmatrix}c_0&c_1&\cdots&c_n\\c_1&c_0&\cdots&c_{n-1}\\ \vdots&\vdots&\ddots&\vdots\\c_n&c_{n-1}&\cdots&c_0
\end{bmatrix}
\end{displaymath}
is positive definite.
find an infinite extension $c_{n+1},c_{n+2},\dots$ such that the function \eqref{fexpansion} is a rational positive-real function of degree at most $n$. This is the proper partial realization problem connected to the system identification problem posed in the introduction. If we ignore the condition that $f(z)$ be positive real, we have a deterministic partial realization problem equivalent to Pad\'e approximation \cite{GraggL}. This is the problem solved in the original papers on subspace identification. 

The following theorem provides a smooth complete parameterization of the set set of solutions to rational covariance extension problem. 

\begin{theorem}\label{thm:covext}
Let $(c_0,c_1,\dots,c_n)$ be a positive covariance sequence. Then, given any Schur polynomial \eqref{sigma}, there is one and only one Schur polynomial \eqref{a} and $\rho>0$ such that 
\begin{displaymath}
w(z)=\rho\frac{\sigma(z)}{a(z)}
\end{displaymath}
 is a shaping filter for $(c_0,c_1,\dots,c_n)$. The mapping from $\sigma$ to $(a,\rho)$ is diffeomorphism. 
\end{theorem}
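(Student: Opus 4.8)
The plan is to realize $(a,\rho)$ as the unique critical point of a strictly convex variational problem parametrized by $\sigma$, and then to read off the diffeomorphism property from the implicit function theorem applied to the optimality conditions. First I would fix the positive sequence $(c_0,\dots,c_n)$ and let $\mathcal F$ be the family of integrable spectral densities $\Phi\ge 0$ on the unit circle whose first $n+1$ Fourier coefficients are $c_0,\dots,c_n$; since $T>0$ this set is convex and nonempty (it contains, e.g., the maximum-entropy extension). For a Schur polynomial $\sigma$ as in \eqref{sigma} define on $\mathcal F$ the functional
\[
\mathbb J_\sigma(\Phi)=-\frac{1}{2\pi}\int_{-\pi}^{\pi}\big|\sigma(e^{i\theta})\big|^{2}\log\Phi(e^{i\theta})\,d\theta .
\]
Since $-\log$ is strictly convex and $|\sigma|^{2}$ is continuous and bounded away from zero on the circle (a Schur polynomial has no unimodular root), $\mathbb J_\sigma$ is strictly convex on $\mathcal F$. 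Attaching Lagrange multipliers $p_0,\dots,p_n$ to the $n+1$ moment constraints, the partial infimum over $\Phi$ is finite precisely when the symmetrized multiplier $Q(e^{i\theta})=\sum_{|k|\le n}p_k e^{ik\theta}$ lies in the open cone $\mathcal Q_+$ of pseudopolynomials that are positive on the circle, and then the minimizing $\Phi$ has the rational form $\Phi=|\sigma|^{2}/Q$.

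Next I would analyze the dual functional on $\mathcal Q_+$ and show it is (i) smooth and strictly concave, and (ii) coercive, tending to $-\infty$ as $Q$ approaches $\partial\mathcal Q_+$ (where $Q$ acquires a zero on the circle); step (ii) is the only place where the strict positivity of the covariance data, together with the strict positivity of $|\sigma|^{2}$, is used. Hence there is an interior maximizer $Q^\star$, the duality gap vanishes, and the primal optimum is the genuine rational spectral density $\Phi^\star=|\sigma|^{2}/Q^\star$. Writing the stable spectral factorization $Q^\star=\rho^{-2}|a|^{2}$ with $a$ monic Schur and $\rho>0$ yields $\Phi^\star=|w|^{2}$ with $w=\rho\sigma/a$, so that $w$ is a shaping filter for $(c_0,\dots,c_n)$ with numerator $\sigma$; equivalently $a$ and $\rho$ satisfy \eqref{absigma}. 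For uniqueness, any shaping filter $\rho\sigma/\tilde a$ for the data produces a feasible $\tilde\Phi=|\sigma|^{2}/\tilde Q\in\mathcal F$ which one verifies satisfies the Euler--Lagrange equation for $\mathbb J_\sigma$; strict convexity then forces $\tilde\Phi=\Phi^\star$, hence $\tilde Q=Q^\star$ and, by uniqueness of the stable spectral factor, $\tilde a=a$ and $\tilde\rho=\rho$.

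For the diffeomorphism claim, note that the interior maximizer $Q^\star$ is cut out by the vanishing of the gradient of the dual functional --- $n+1$ real-analytic equations in the $n+1$ coefficients of $Q$ (equivalently, in the $n$ coefficients of $a$ together with $\rho$) whose data are the coefficients of $\sigma$. The Jacobian with respect to $Q$ is the Hessian of the strictly concave dual functional at the interior maximizer, hence negative definite and invertible, so the implicit function theorem makes $\sigma\mapsto Q^\star$, and therefore $\sigma\mapsto(a,\rho)$, real analytic; the inverse is smooth by the symmetric argument (or by recovering $\sigma$ from $(a,\rho)$ and $(c_0,\dots,c_n)$ directly through \eqref{absigma}). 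Thus $\sigma\mapsto(a,\rho)$ is a diffeomorphism of the $n$-dimensional manifold of monic Schur polynomials of degree $n$ onto its image.

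The step I expect to be the main obstacle is the coercivity in the second paragraph: controlling $\Phi$ as the dual variable $Q$ degenerates toward $\partial\mathcal Q_+$, so that the optimal spectral density is the nonsingular function $|\sigma|^{2}/Q^\star$ rather than a measure carrying a singular part. This is exactly the gap between algebraic and positive degree flagged in the introduction, and it is where both hypotheses --- $T>0$ and $\sigma$ Schur --- are indispensable; it is also the point that the early subspace procedures get wrong. An approach that avoids convex duality is the original topological one: prove the parametrization map is proper, compute its topological degree to get surjectivity onto the space of pairs (positive data, Schur $\sigma$), and obtain injectivity and local invertibility from nonsingularity of its Jacobian --- but that trades the coercivity estimate for an equally delicate degree computation.
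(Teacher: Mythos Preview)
The paper does not actually prove Theorem~\ref{thm:covext} in the text; it is quoted as background, with existence attributed to \cite{GeorgiouThesis,Georgiou87} and uniqueness together with the diffeomorphism statement to \cite{BLGM}. Immediately afterwards the paper remarks that ``for each parameter $\sigma$ there is a convex optimization problem solving for $(a,\rho)$, which first appeared in \cite{BGuL}'', and then turns to the Covariance Extension Equation as an alternative route. So there is no in-paper proof to compare against.

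That said, your outline is a correct sketch and it is precisely the convex-duality approach of \cite{BGuL,SIGEST}: primal entropy-type functional with moment constraints, dual over the cone $\mathcal Q_+$ of positive pseudopolynomials, coercivity at $\partial\mathcal Q_+$ to force an interior maximizer, spectral factorization of $Q^\star$ to read off $(a,\rho)$, and the implicit function theorem on the dual first-order conditions (with negative-definite Hessian) for smooth dependence on $\sigma$. You also correctly flag coercivity as the one genuinely delicate step, and you note the alternative topological route (properness plus a degree computation), which is in fact closer to the original argument of \cite{BLGM} cited by the paper. One small clean-up: for the inverse smoothness you do not need a ``symmetric argument''---global bijectivity (your existence and uniqueness) together with the local-diffeomorphism conclusion of the implicit function theorem already yields a global diffeomorphism onto the image; the appeal to \eqref{absigma} is a separate, constructive way to see the inverse.
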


The existence part of Theorem~\ref{thm:covext} was proved in \cite{GeorgiouThesis} (also see \cite{Georgiou87}) and the rest of the theorem in \cite{BLGM}. Note that $\sigma(z)$ and $a(z)$ may have common roots, so the degree of $w(z)$ might be less than $n$.

For each parameter $\sigma$ there is a convex optimization problem solving for $(a,\rho)$, which first appeared in \cite{BGuL} (also see \cite{SIGEST,BEL}), but here we shall consider a different method of solution described in Section~\ref{sec:CEE}. 

\subsection{Algebraic and positive degree}\label{sec:degree}

The {\em algebraic degree\/} of $(c_0,c_1,\dots,c_n)$ is the minimal degree of a deterministic partial realization of $(c_0,c_1,\dots,c_n)$. It is related to the rank of a Hankel matrix \cite{GraggL} and has the generic value $[\tfrac{n}{2}]$, i.e., sequences  $(c_0,c_1,\dots,c_n)$ that fail to have this algebraic degree live on a thin (lower-dimensional) subset, and there is no nonempty open set of vectors $c=(c_0,c_1,\dots,c_n)'\in\mathbb{R}^{n+1}$ having an algebraic degree different from $[\tfrac{n}{2}]$.

The {\em positive degree\/} of $(c_0,c_1,\dots,c_n)$ is the minimum degree of any solution to the corresponding rational covariance extension problem. By contrast, it does not have a generic value, as seen from the following theorem proved in \cite{BLpartial97}. 

\begin{theorem}\label{thm:posdeg}
 For each $p$ such that $[n/2]\leq p\leq n$, there is a nonempty open set in $\mathbb{R}^{n+1}$ of sequences  $(c_0,c_1,\dots,c_n)$ for which $p$ is the positive degree. The maximal positive degree is $n$.
\end{theorem}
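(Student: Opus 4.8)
The plan is to exhibit, for each integer $p$ with $[n/2]\le p\le n$, an explicit covariance sequence whose positive degree is exactly $p$, and then argue that the positive-degree-$p$ property is stable under small perturbations, giving the desired open set. First I would recall the parameterization of Theorem~\ref{thm:covext}: fixing the numerator parameter $\sigma(z)$ pins down a unique shaping filter $w(z)=\rho\,\sigma(z)/a(z)$, and the degree of that particular solution equals $n$ minus the number of common roots of $\sigma$ and $a$. In particular, the \emph{minimal} degree among all solutions — the positive degree — is at most $n$, which settles the last sentence; the content is in realizing every value down to $[n/2]$ and showing each is attained on an open set.

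For the lower end, $p=[n/2]$: choosing $(c_0,\dots,c_n)$ to have the generic algebraic degree $[n/2]$ forces \emph{every} rational extension to have degree at least $[n/2]$ (an algebraic partial realization of smaller degree would contradict the definition of algebraic degree), so the positive degree is $\ge[n/2]$; and one checks that the maximum-entropy solution $\sigma(z)=z^n$ has degree $[n/2]$ here, so equality holds. Since the generic algebraic degree already holds on an open dense set, and the maximum-entropy solution depends continuously on the data with its degree lower-semicontinuous, $p=[n/2]$ is attained on a nonempty open set. For intermediate $p$, the idea is to build $c$ so that the Hankel-matrix rank obstruction (algebraic degree) is small but \emph{no} positive-real extension of degree less than $p$ exists. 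Concretely I would take a positive-real $f_0$ of exact degree $p$ whose associated spectral density $\Phi_0$ is strictly positive on the circle and whose pseudo-polynomial $\sigma_0(z)\sigma_0(z^{-1})$ (numerator of $\Phi_0$) has full degree with no cancellation against $a_0$, and let $(c_0,\dots,c_n)$ be the first $n+1$ covariance lags of $\Phi_0$. Then $p$ is \emph{a} positive degree; one must rule out smaller ones.

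The main obstacle — and the step I would spend the most effort on — is precisely showing \emph{no solution of degree $<p$} exists for this $c$, and that this persists under perturbation. The approach is to use the convex-optimization / variational characterization of solutions (the functional of \cite{BGuL}, see also \cite{SIGEST,BEL}): a degree-$\le k$ solution exists iff a certain primal problem over spectral densities with numerator degree $\le k$ is feasible with the moment constraints $\int e^{ik\theta}\Phi = c_k$, $k=0,\dots,n$. One shows feasibility fails for $k<p$ at the constructed $c$ by a dimension count — the variety of degree-$<p$ positive-real functions matching $n+1$ moments has positive codimension in the space of matching data — and then invokes that \emph{non}-feasibility of a finite family of such problems is an open condition (the feasible set varies upper-semicontinuously in $c$, so its emptiness is stable). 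Combining: on a small ball around $c$ the positive degree is $\ge p$, while the $\sigma$-parameterization continues to furnish a degree-$\le p$ solution (the relevant cancellation being robust, or using $\sigma=a_0$ up to the continuity of the map in Theorem~\ref{thm:covext}), so it is $=p$ throughout that ball. The delicate point is matching the bookkeeping of \eqref{c2b}--\eqref{Hankel}, the algebraic degree via $\operatorname{rank}H_d$, and the positive degree so that the two genuinely decouple — this is where the cited construction in \cite{BLpartial97} does the real work, and I would follow its explicit choice of $c$ rather than reinvent it.
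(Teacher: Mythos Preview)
First, note that this paper does not itself prove Theorem~\ref{thm:posdeg}; the result is quoted from \cite{BLpartial97} as background, so there is no in-paper argument to compare your proposal against.

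That said, your sketch contains a concrete error in the $p=[n/2]$ case. You assert that when $(c_0,\dots,c_n)$ has algebraic degree $[n/2]$, ``the maximum-entropy solution $\sigma(z)=z^n$ has degree $[n/2]$.'' This is false: the maximum-entropy filter is the AR model $w(z)=\rho\, z^n/a(z)$, which has degree $n$ whenever $a_n\neq 0$, and a low algebraic degree of the data in no way forces cancellation between $z^n$ and $a(z)$. What actually works for $p=[n/2]$ is the \emph{same} construction you propose for general $p$: start from a strictly positive-real $f_0$ of exact degree $[n/2]$, read off its first $n+1$ lags, and use that the positive degree is sandwiched between the algebraic degree (which is $\le [n/2]$, generically $=[n/2]$) and $[n/2]$. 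Your separate treatment of this endpoint is both unnecessary and incorrect as written.

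For $[n/2]<p\le n$ your outline explicitly defers the only substantive step --- ruling out positive-real interpolants of degree $<p$ and showing this is an open condition in $c$ --- to \cite{BLpartial97}. The dimension count you gesture at (``the variety of degree-$<p$ positive-real functions matching $n+1$ moments has positive codimension'') does not by itself establish non-existence for any \emph{particular} $c$, and the semicontinuity claims you invoke for openness (of infeasibility, and of the degree of the $\sigma$-parametrized solution) are exactly what requires proof. So as an independent argument the proposal is incomplete; as a pointer back to the cited source it is accurate but circular.
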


Since the original subspace identification algoritms are based on Hankel factorization, they produce solutions with the algebraic degree, which may not coincide with the required positive degree. In \cite{DLM98} we presented a system that produces data for which there is massive failure of the basic subspace identification algorithms. This led to quite a lot of work devising {\em ad hoc\/} fixes that are now included in the codes.

\subsection{Covariance extension equation}\label{sec:CEE}

Given the Schur polynomial \eqref{sigma},
we define 
\begin{equation}\label{sigmaGammah}
\sigma = \bmatrix \sigma_1\\ \sigma_2\\ \vdots\\ \sigma_n \endbmatrix,
\quad 
\Gamma  = \begin{bmatrix} -\sigma_1  & 1 & 0 & \cdots & 0\\ -\sigma_2 
& 0 & 1 & \cdots & 0\\
\vdots& \vdots & \vdots & \ddots & \vdots\\ -\sigma_{n-1} & 0 & 0 &
\cdots & 1\\ -\sigma_n & 0 & 0 & \cdots & 0 
\end{bmatrix} ,
\quad
h = \bmatrix 1\\0\\Ê\vdots\\0 \endbmatrix. 
\end{equation}Moreover, we represent the covariance data in terms of the first $n$ coefficients in the expansion 
\begin{equation}\label{expansion}
\begin{split}
&\frac{z^n}{z^n + c_1 z^{n-1}+\dots + c_n}\\ 
&\quad= 1 - u_1 z^{-1} - u_2z^{-2} - u_3 z^{-3} - \dots
\end{split}
\end{equation}
about infinity and define 
\begin{equation}\label{Uu}
u = \begin{bmatrix} u_1\\u_2\\ \vdots  \\u_n \end{bmatrix} ,
\qquad 
U  =  \begin{bmatrix} 0&  &   &  & \\ u_1&0 & & &\\ u_2&u_1& & & \\
\vdots &\vdots& \ddots&  &\\
u_{n-1}&u_{n-2}&\cdots&u_1&0\end{bmatrix}.
\end{equation}  
Finally define the the function $g: \Bbb{R}^{n\times n}\to \Bbb{R}^n$given by
\begin{equation}\label{g(P)} 
g(P)= u +U\sigma + U\Gamma Ph .
\end{equation}
Then the following nonstandard Riccati equation  
\begin{equation} \label{CEE}
 P = \Gamma (P-Phh'P) \Gamma' + g(P)g(P)' ,
\end{equation} 
where $^\prime$ denotes transposition, was called the Covariance Extension Equation (CEE) in  \cite{BLpartial97}.
It parameterizes the solutions to the rational covariance
extension problem in terms of the covariance data and the numerator polynomial $\sigma(z)$ corresponding to desired spectral zeros. In fact, the following theorem was proved in \cite{BLpartial97}.

\begin{theorem}\label{thm:CEE}
Let $(c_0,c_1,\dots,c_n)$ be a positive  covariance sequence. Then, for each Schur polynomial \eqref{sigma}, there is a unique symmetric  solution $P$ of CEE satisfying $h'Ph<1$. Moreover, for each $\sigma$ there is a unique shaping filter \eqref{w}  for $(c_0,c_1,\dots,c_n)$,
where $a(z)$ and $\rho$ are given in terms of the corresponding $P$ by
\begin{equation}
\label{arho}
\begin{split}
&a=(I-U)(\Gamma Ph+\sigma)-u,\\
&\rho=\sqrt{1-h'Ph} .
\end{split}
\end{equation} 
\end{theorem}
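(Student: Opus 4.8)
The plan is to reduce CEE to the already-established parameterization in Theorem~\ref{thm:covext}, exhibiting the solution $P$ as the controllability Gramian of a state-space realization of the shaping filter. First I would set up the realization. Given a Schur polynomial $\sigma(z)$, Theorem~\ref{thm:covext} furnishes a unique pair $(a(z),\rho)$ and hence a shaping filter $w(z)=\rho\,\sigma(z)/a(z)$ for the data $(c_0,\dots,c_n)$. Write $w(z)$ in a convenient companion-form realization $(F,g,h',1)$ whose dynamics matrix is built from $a(z)$; the point is that the matrix $\Gamma$ in \eqref{sigmaGammah} is precisely the companion matrix associated with the \emph{numerator} $\sigma(z)$, while the covariance data enters through $u$ and $U$ via the expansion \eqref{expansion}, which encodes the inverse of the ``denominator'' $z^n+c_1z^{n-1}+\dots+c_n$. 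I would then define $P$ to be the (symmetric, positive semidefinite) solution of the Lyapunov-type recursion that propagates the state covariance of this realization one step, and show by direct substitution that this $P$ satisfies \eqref{CEE}, with $g(P)$ in \eqref{g(P)} playing the role of the one-step gain. The normalization $h'Ph<1$ then reads $h'Ph=1-\rho^2$, i.e.\ the ``innovations variance'' is strictly positive, which is exactly the condition $\rho>0$ guaranteed by Theorem~\ref{thm:covext}; this simultaneously yields the second formula in \eqref{arho}.

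The key algebraic steps, in order, are: (i) derive the identity relating $\Gamma$, $h$, $\sigma$, $u$, $U$ to the polynomial relation $a(z)b(z^{-1})+b(z)a(z^{-1})=2\rho^2\sigma(z)\sigma(z^{-1})$ of \eqref{absigma}, so that the quadratic term $g(P)g(P)'$ in CEE corresponds to the rank-one spectral-factor update; (ii) verify that the first formula in \eqref{arho}, $a=(I-U)(\Gamma Ph+\sigma)-u$, is forced by matching coefficients of nonnegative powers of $z$ in $b(z)=2f(z)a(z)$, i.e.\ it is the vector form of \eqref{c2b} once $b$ is expressed through $P$; (iii) show the map $\sigma\mapsto P$ is well-defined and single-valued on the set $\{h'Ph<1\}$ by running the argument backwards — any symmetric solution with $h'Ph<1$ reconstructs, via \eqref{arho}, a positive-real $f(z)$ of degree $\le n$ interpolating the data, which by the uniqueness half of Theorem~\ref{thm:covext} pins down $(a,\rho)$ and hence $P$.

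For existence I would invoke Theorem~\ref{thm:covext} to produce $(a,\rho)$, build the filter, and read off $P$ as above; the substitution check then certifies it solves CEE. For uniqueness I would argue contrapositively: two symmetric solutions $P_1\ne P_2$ with $h'P_ih<1$ would, through \eqref{arho}, produce two distinct shaping filters for the same $\sigma$ and the same data, contradicting Theorem~\ref{thm:covext}. I expect the main obstacle to be step (i)–(ii): the bookkeeping that identifies the nonlinear term $\Gamma(P-Phh'P)\Gamma' + g(P)g(P)'$ with a genuine covariance propagation is delicate, because $\Gamma$ is the \emph{zero} dynamics rather than the pole dynamics, so the correspondence is not the textbook Riccati recursion but a ``dual'' one, and one must be careful that the Schur property of $\sigma(z)$ (not of $a(z)$) is what guarantees the relevant contraction and hence a bounded, well-defined $P$. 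Checking that $g(P)$ as defined in \eqref{g(P)} is exactly the right gain vector — rather than off by a factor or a shift — is where essentially all the computation lives.
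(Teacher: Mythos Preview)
Your overall strategy coincides with the paper's own derivation in Section~\ref{sec:new}: invoke Theorem~\ref{thm:covext} to produce $(a,\rho)$, realize $f$ and $w$ in companion form, identify $P$ via stochastic realization theory, and transform the resulting equation into CEE. The paper is more specific than your sketch on two points. First, $P$ is not introduced as a controllability Gramian satisfying a Lyapunov recursion but as the \emph{minimal} solution of the standard algebraic Riccati equation \eqref{Riccati}; this is equivalently the state covariance of the innovations realization, but it is the ARE formulation that drives the algebra. Second, the workhorse for your steps (i)--(ii) is the matrix inversion lemma: applied to $b(z)/a(z)$ it gives $g=\tfrac12(b-a)$ (Lemma~\ref{lem:ab2g}); applied to $\sigma(z)/a(z)$ it gives $k=\rho(\sigma-a)$ and hence $g=\Gamma Ph+\sigma-a$ (Lemma~\ref{lem:P2g}); a direct expansion then rewrites \eqref{Riccati} as \eqref{CEEprel}. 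The interpolation step is exactly your (ii): $b=2c+(2C-I)a$ together with $Cu=c$ and $C(I-U)=I$ turns $g$ into \eqref{g(P)} and recovers \eqref{arho}. The paper's two-stage split (realization first, interpolation second) is what exposes the universality you do not mention.

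There is, however, a genuine gap in your uniqueness argument. You assert that $P_1\ne P_2$ with $h'P_ih<1$ would yield two distinct shaping filters via \eqref{arho}, contradicting Theorem~\ref{thm:covext}. But the formulas in \eqref{arho} depend on $P$ only through the vector $Ph$, so $P_1\ne P_2$ with $P_1h=P_2h$ would produce the \emph{same} $(a,\rho)$ and no contradiction arises. This case must be ruled out separately: if $P_1h=P_2h$ then also $g(P_1)=g(P_2)$ and $P_1hh'P_1=P_2hh'P_2$, so subtracting the two instances of CEE gives $P_1-P_2=\Gamma(P_1-P_2)\Gamma'$; since the eigenvalues of $\Gamma$ are the roots of the Schur polynomial $\sigma$, the only symmetric solution is $P_1-P_2=0$. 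Even with this patch, the reverse implication --- that \emph{every} symmetric solution of CEE with $h'Ph<1$ produces, via \eqref{arho}, a Schur $a(z)$ and a genuine positive-real interpolant of the data --- is not simply ``running the argument backwards,'' and the paper itself does not attempt it here, deferring the complete proof of Theorem~\ref{thm:CEE} to \cite{BLpartial97}.
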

Here $a:=(a_1,a_2,\dots,a_n)'$. The degree of $w(z)$ equals the rank of $P$.

In \cite{BFL,BFLcdc05} we presented a homotopy continuation algorithm for solving CEE. 

\subsection{Kalman's question}\label{sec:Kalman}

In view of the fact that the algebraic degree can be determined from the rank of a Hankel matrix of the covariance data \cite{Kalmanbook,GraggL}, in 1972 Kalman \cite{Kalmanprivate} posed the question whether there is a similar matrix-rank criterion for determining the positive degree. In view of Theorem~\ref{thm:posdeg}, this would seem impossible. The closest we have found in this direction is the following result, which follows from Theorem~\ref{thm:CEE} and is  reported in \cite{BLpartial97}.

\begin{proposition}
Let $P(\sigma)$ be the unique solution of \eqref{CEE}.
Then the positive degree of the covariance sequence $(c_0,c_1,\dots,c_n)$ equals the minimum of $\text{rank}\, P(\sigma)$ over all Schur polynomials $\sigma(z)$.
\end{proposition}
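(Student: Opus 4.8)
The plan is to combine Theorem~\ref{thm:CEE} with the definition of positive degree as a minimization over \emph{all} solutions of the rational covariance extension problem. First I would recall from the last sentence following Theorem~\ref{thm:CEE} that, for a fixed Schur polynomial $\sigma(z)$, the degree of the associated shaping filter $w(z)$ equals $\operatorname{rank} P(\sigma)$, where $P(\sigma)$ is the unique symmetric solution of CEE with $h'Ph<1$. Next I would invoke Theorem~\ref{thm:covext}: as $\sigma$ ranges over all Schur polynomials, the pairs $(a,\rho)$ — equivalently the filters $w(z)=\rho\,\sigma(z)/a(z)$ — range over \emph{all} solutions of the rational covariance extension problem, since the map $\sigma\mapsto(a,\rho)$ is a bijection (indeed a diffeomorphism) onto the solution set. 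Hence the set of degrees achievable by solutions of the covariance extension problem is exactly the set $\{\operatorname{rank} P(\sigma) : \sigma \text{ Schur}\}$.

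From here the argument is a direct comparison of two minima. By definition the positive degree is the minimum degree over all solutions of the rational covariance extension problem; by the previous paragraph that degree, for the solution indexed by $\sigma$, is $\operatorname{rank} P(\sigma)$; therefore the positive degree equals $\min_\sigma \operatorname{rank} P(\sigma)$, the minimum being taken over all Schur polynomials $\sigma$. One subtlety worth addressing explicitly is that $\sigma$ and $a$ may share common roots (as noted after Theorem~\ref{thm:covext}), so $\deg w(z)$ can drop below $n$; this is precisely why $\operatorname{rank} P(\sigma)$, rather than $n$, is the right quantity, and it is consistent with the identity $\deg w = \operatorname{rank} P$.

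The main obstacle — really the only nontrivial point — is establishing that the parameterization by $\sigma$ exhausts \emph{all} minimum-phase solutions and not merely a subfamily; but this is exactly the content of Theorem~\ref{thm:covext} (the "one and only one" and the diffeomorphism claim), so once that is cited the proof is essentially bookkeeping. A minor secondary point is to make sure the correspondence between the filter $w(z)$ of Theorem~\ref{thm:covext} and the solution $P(\sigma)$ of CEE in Theorem~\ref{thm:CEE} is the same correspondence — it is, since \eqref{arho} recovers $(a,\rho)$ from $P(\sigma)$ and the uniqueness clauses in both theorems force these to agree. I would therefore keep the write-up short: state the degree identity $\deg w(\sigma)=\operatorname{rank} P(\sigma)$, state that $\sigma\mapsto w(\sigma)$ is onto the solution set, and conclude by taking the minimum over $\sigma$ of both sides.
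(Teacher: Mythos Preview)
Your proposal is correct and matches the paper's approach: the paper does not give a standalone proof but simply states that the proposition ``follows from Theorem~\ref{thm:CEE}'' together with the degree identity $\deg w(\sigma)=\operatorname{rank} P(\sigma)$, which is exactly what you spell out. Your explicit invocation of Theorem~\ref{thm:covext} to guarantee that the parameterization $\sigma\mapsto w(\sigma)$ exhausts all solutions is a useful clarification the paper leaves implicit.
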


\section{A NEW DERIVATION OF THE COVARIANCE EXTENSION EQUATION}\label{sec:new}

We present a new derivation of CEE which is divided into two separate steps, namely one that just imposes the condition that $f(z)$ be positive real and rational of degree at most $n$, and a second one imposing the interpolation condition that $f(z)$ should match the $n+1$ first covariances  $(c_0,c_1,\dots,c_n)$. In this way, we see that this nonstandard matrix Riccati equation is universal in the sense that it can be used to solve more general analytic interpolation problems only changing certain parameters. In fact, the first step remains the same in this more general context. 

\subsection{Stochastic realization}\label{sec:newA}

The rational function $f(z)$ defined by \eqref{f} has a realization
\begin{equation}
\label{freal}
f(z)=\tfrac12 + h'(zI-F)^{-1}g,
\end{equation}
where 
\begin{equation}
\label{F}
F= \begin{bmatrix} -a_1  & 1 & 0 & \cdots & 0\\
-a_2 & 0 & 1 & \cdots & 0\\
\vdots& \vdots & \vdots & \ddots & \vdots\\
-a_{n-1} & 0 & 0 &\cdots & 1\\ 
-a_n & 0 & 0 & \cdots & 0 
\end{bmatrix}= J-ah' ,
\end{equation}
$J$ is the upward shift matrix, and $g$ is an $n$-vector to be determined. Note that this need not be a minimal realization, as there could be cancellations of common zeros of $a(z)$ and $b(z)$. 

\begin{lemma}\label{lem:ab2g}
The vector $g$ in \eqref{freal} is given by 
\begin{equation}
\label{ab2g}
g=\frac12 (b-a),
\end{equation}
where  $a:=(a_1,a_2,\dots,a_n)'$ and $b:=(b_1,b_2,\dots,b_n)'$ are the $n$-vectors of coefficients in the polynomials $a(z)$ and $b(z)$, respectively, in \eqref{f}.
\end{lemma}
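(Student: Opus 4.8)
The plan is to compute the transfer function $h'(zI-F)^{-1}g$ explicitly using the companion-type structure of $F$ and match it against the known expansion of $f(z)-\tfrac12$. First I would observe that, because $F = J - ah'$ is in the companion form \eqref{F}, its characteristic polynomial is $\det(zI-F) = z^n + a_1 z^{n-1} + \dots + a_n = a(z)$, so the realization \eqref{freal} has denominator $a(z)$, consistent with \eqref{f}. The next step is to identify the numerator: writing $h'(zI-F)^{-1}g = p(z)/a(z)$ for some polynomial $p(z)$ of degree at most $n-1$, the first row of the resolvent $(zI-F)^{-1}$ has a classical closed form for companion matrices, namely $h'(zI-F)^{-1} = \frac{1}{a(z)}\,[\,z^{n-1},\ z^{n-2},\ \dots,\ 1\,]$ (the Krylov/observability structure: $h'F^k$ generates the successive rows). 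Hence $p(z) = \sum_{k=1}^{n} g_k z^{n-k}$, i.e. the coefficient vector of $p$ is exactly $g = (g_1,\dots,g_n)'$.

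With that in hand, the identity to verify is $\tfrac12 + p(z)/a(z) = \tfrac12\, b(z)/a(z)$, equivalently $a(z) + 2p(z) = b(z)$. Since $a(z)$ and $b(z)$ are both monic of degree $n$ (see \eqref{a} and \eqref{b}) while $p(z)$ has degree at most $n-1$, matching the coefficient of $z^{n-k}$ for $k=1,\dots,n$ gives $a_k + 2g_k = b_k$, that is $g = \tfrac12(b-a)$, which is \eqref{ab2g}. This also confirms automatically that the leading coefficients match ($1 = 1$), so no degree-$n$ obstruction arises.

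The only genuinely non-routine point is justifying the closed form $h'(zI-F)^{-1} = a(z)^{-1}[z^{n-1},\dots,1]$ for the companion matrix \eqref{F}; everything after that is coefficient bookkeeping. I would establish it either by the adjugate formula together with the known cofactor structure of a companion matrix, or — cleaner — by verifying directly that $[z^{n-1},\dots,1](zI-F) = a(z)\,h'$, which is an $n$-entry polynomial identity that follows immediately from the shift action of $J$ and the form $F = J - ah'$. Multiplying on the right by $(zI-F)^{-1}$ then yields the claim, and substituting $g$ finishes the proof.
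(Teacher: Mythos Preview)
Your argument is correct. The verification that $[z^{n-1},\dots,1](zI-F)=a(z)h'$ goes through exactly as you describe, and from there the coefficient matching is immediate.

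The paper takes a different route. Rather than computing the first row of the resolvent directly, it starts from $b(z)/a(z)=1+2h'(zI-F)^{-1}g$ and applies the matrix inversion lemma (the Sherman--Morrison--Woodbury identity reproduced in the Appendix) to obtain the reciprocal
\[
\frac{a(z)}{b(z)}=1-2h'\bigl(zI-(J-ah'-2gh')\bigr)^{-1}g .
\]
Since this is again a companion-form realization, the denominator must equal the characteristic polynomial of $J-(a+2g)h'$, forcing $a+2g=b$. Your approach is arguably more elementary---it avoids the matrix inversion lemma entirely and relies only on the explicit first row of the companion resolvent. The paper's approach, on the other hand, sets up a reusable template: exactly the same inversion trick is applied in the very next lemma to $\sigma(z)/a(z)=1+h'(zI-F)^{-1}\rho^{-1}k$ in order to identify $k$, so the method pays for itself by handling both $g$ and $k$ uniformly. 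Your direct computation would also work there, but the inversion device makes the structural parallel between the two lemmas more visible.
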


\begin{proof}
From \eqref{f} and \eqref{freal} we have
\begin{displaymath}
\frac{b(z)}{a(z)}=1 +2h'(zI-F)^{-1}g,
\end{displaymath}
to which we apply the matrix inversion lemma (Appendix) to obtain
\begin{displaymath}
\begin{split}
\frac{a(z)}{b(z)}&=1 -2h'(2gh'+zI-F)^{-1}g\\
&=1 -2h'[zI-(J-ah' -2gh')]^{-1}g.
\end{split}
\end{displaymath}
Hence, since $b(z)$ is the denominator polynomial, we must have $a+2g=b$, from which \eqref{ab2g} follows.
\end{proof}

In the same way, $w(z)$, given by \eqref{w}, has a realization
\begin{equation}
\label{wreal}
w(z)=\rho + h'(zI-F)^{-1}k
\end{equation}
for some $n$-vector $k$. 
Since $w(z)$ is a minimum-phase spectral factor of $f(z)+f(z^{-1})$, from stochastic realization theory \cite[Chapter 6]{LPbook} we have that
\begin{equation}
\label{rhok}
\rho=\sqrt{1-h'Ph}, \qquad k=\rho^{-1}(g-FPh),
\end{equation}
where $P$ is symmetric minimum solution of the algebraic Riccati equation
\begin{equation}
\label{Riccati}
P=FPF'+(g-FPh)(1-h'Ph)^{-1}(g-FPh)' .
\end{equation}

\begin{lemma}\label{lem:P2g}
The vectors $g$ and $k$ in \eqref{freal} and \eqref{wreal} are given by
\begin{subequations}\label{gk}
\begin{align}
   g &=\Gamma Ph+\sigma -a   \label{P2g}\\
   k & =\rho(\sigma -a), \label{P2k}
\end{align}
\end{subequations}
where $P$ is the minimal solution of \eqref{Riccati}.
\end{lemma}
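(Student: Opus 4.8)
The plan is to convert the two realizations \eqref{freal} and \eqref{wreal} into explicit ratios of polynomials, read the numerator vectors off directly, and then pass between $g$ and $k$ through the stochastic realization relation \eqref{rhok}. The one computational ingredient I would isolate first is the resolvent formula for the companion matrix $F$: since $F=J-ah'$ and $h$ is the first unit vector, one checks column by column that $(z^{n-1},z^{n-2},\dots,z,1)(zI-F)=a(z)h'$ --- for columns $2,\dots,n$ the superdiagonal $1$'s cancel consecutive powers, while the first column reproduces $a(z)$ --- whence
\[
h'(zI-F)^{-1}=\frac{1}{a(z)}\,(z^{n-1},\,z^{n-2},\dots,z,\,1).
\]
The identical statement holds with $(F,a)$ replaced by $(\Gamma,\sigma)$, and, applied to \eqref{freal}, it even re-derives $g=\tfrac12(b-a)$ of Lemma~\ref{lem:ab2g} as a coefficient match.

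I would then apply this to \eqref{wreal}, which gives $w(z)-\rho=\big(k_1z^{n-1}+\dots+k_n\big)/a(z)$, while \eqref{w} gives $w(z)-\rho=\rho\big(\sigma(z)-a(z)\big)/a(z)$; since $\sigma(z)$ and $a(z)$ are both monic of degree $n$, this last numerator equals $\rho\big[(\sigma_1-a_1)z^{n-1}+\dots+(\sigma_n-a_n)\big]$. Matching coefficients of like powers of $z$ yields $k=\rho(\sigma-a)$, which is \eqref{P2k}. For \eqref{P2g} I would combine $g=FPh+\rho k$ (from \eqref{rhok}) with the two structural identities $F-\Gamma=(\sigma-a)h'$ (immediate from $F=J-ah'$ and $\Gamma=J-\sigma h'$) and $\rho^{2}=1-h'Ph$ (also from \eqref{rhok}): substituting $FPh=\Gamma Ph+(\sigma-a)(h'Ph)$ and $k=\rho(\sigma-a)$ gives $g=\Gamma Ph+(\sigma-a)\,(h'Ph+\rho^{2})=\Gamma Ph+\sigma-a$, since $h'Ph+\rho^{2}=1$.

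I do not expect a genuine obstacle here --- the lemma is essentially an exercise in unwinding definitions. The delicate part is bookkeeping: getting the indexing of the power vector in the companion resolvent right, and keeping track that $h'Ph$ is a scalar so that it factors out of $(\sigma-a)(h'Ph)$. Note also that minimality of $P$ is not used directly; it enters only through the stochastic realization formulas \eqref{rhok}--\eqref{Riccati}, which are invoked as given, so that Lemma~\ref{lem:P2g} amounts to a translation of those formulas into the $(\sigma,a)$ coordinates.
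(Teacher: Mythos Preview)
Your argument is correct. The derivation of \eqref{P2g} from \eqref{P2k} via $g=FPh+\rho k$, $F-\Gamma=(\sigma-a)h'$, and $\rho^{2}=1-h'Ph$ is exactly the computation the paper carries out (the paper phrases the intermediate step as $(1-h'Ph)(\sigma-a)=g-JPh+ah'Ph$, but this is the same identity).

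Where you differ is in obtaining \eqref{P2k}. The paper does not compute the companion resolvent explicitly; instead it applies the matrix inversion lemma to $\sigma(z)/a(z)=1+h'(zI-F)^{-1}\rho^{-1}k$ to obtain $a(z)/\sigma(z)=1-h'\bigl[zI-J+(a-\rho^{-1}k)h'\bigr]^{-1}\rho^{-1}k$, and then reads off $a-\rho^{-1}k=\sigma$ from the companion form of the denominator. This mirrors the proof of Lemma~\ref{lem:ab2g} and keeps the whole section running on a single trick. Your route---computing $h'(zI-F)^{-1}=a(z)^{-1}(z^{n-1},\dots,1)$ once and then matching numerator coefficients---is more elementary, avoids the inversion lemma altogether, and makes the passage from realization to polynomial transparent; the cost is the extra resolvent verification, which you handle correctly. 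Either approach is perfectly adequate for a lemma of this kind.
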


\begin{proof}
Applying the matrix inversion lemma (Appendix) to 
\begin{displaymath}
\frac{\sigma(z)}{a(z)}=1+h'(zI-F)^{-1}\rho^{-1}k 
\end{displaymath}
yields
\begin{displaymath}
\begin{split}
\frac{a(z)}{\sigma(z)}&= 1-h'(\rho^{-1}kh' +zI-F)^{-1}\rho^{-1}k\\
&= 1-h'[zI-J+(a-\rho^{-1}k)h']^{-1}\rho^{-1}k.
\end{split}
\end{displaymath}
However, since the denominator is $\sigma(z)$, we must have 
\begin{displaymath}
J -(a-\rho^{-1}k)h'=J-\sigma h'=\Gamma,
\end{displaymath}
and hence \eqref{P2k} follows. Moreover, $\Gamma =F-\rho^{-1}k$. This together with \eqref{P2k} yields
\begin{displaymath}
(1-h'Ph)(\sigma -a)=g -JPh +ah'Ph,
\end{displaymath}
from which \eqref{P2g} follows.
\end{proof}

\begin{lemma}
The minimal solution of \eqref{Riccati} is also the minimal solution of 
\begin{equation}
\label{CEEprel}
P = \Gamma (P-Phh'P) \Gamma' +gg' .
\end{equation}
\end{lemma}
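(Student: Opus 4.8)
The plan is to show that the algebraic Riccati equation \eqref{Riccati} and the modified equation \eqref{CEEprel} have the same minimal solution by demonstrating that they are in fact the \emph{same} equation once the structure uncovered in Lemma~\ref{lem:P2g} is taken into account. The key observation is that \eqref{Riccati} can be rewritten using $g-FPh$, so I would first compute this quantity explicitly. From \eqref{P2g} we have $g=\Gamma Ph+\sigma-a$, and from the proof of Lemma~\ref{lem:P2g} we know $\Gamma=F-\rho^{-1}kh'$, equivalently $F=\Gamma+\rho^{-1}kh'$; combining these with $k=\rho(\sigma-a)$ from \eqref{P2k} should collapse $g-FPh$ into a clean expression. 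In particular I expect $g-FPh$ to equal $\Gamma Ph+\sigma-a-(\Gamma+\rho^{-1}kh')Ph = \sigma-a-\rho^{-1}(\sigma-a)h'Ph = (\sigma-a)(1-\rho^{-2}h'Ph)$... so a little care is needed, but the upshot should be that $g-FPh$ is a scalar multiple of $\sigma-a$, hence of $k$, which is exactly the spectral factor direction.

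Next I would substitute back into \eqref{Riccati}. Writing $F=\Gamma+\rho^{-1}kh'$ and expanding $FPF'$ gives $\Gamma P\Gamma' + \rho^{-1}\Gamma Phk' + \rho^{-1}kh'P\Gamma' + \rho^{-2}kh'Phk'$. The quadratic term $(g-FPh)(1-h'Ph)^{-1}(g-FPh)'$, using $g-FPh$ proportional to $k$ and $\rho^2=1-h'Ph$, should contribute something proportional to $kk'$. The goal is to reorganize all these pieces so that \eqref{Riccati} becomes $P=\Gamma(P-Phh'P)\Gamma'+gg'$. Concretely, $\Gamma(P-Phh'P)\Gamma' = \Gamma P\Gamma' - \Gamma Phh'P\Gamma'$, so I need the cross terms and $kk'$-type terms from the expansion of \eqref{Riccati} to reassemble exactly into $-\Gamma Phh'P\Gamma'+gg'$. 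Using $g=\Gamma Ph+\sigma-a=\Gamma Ph+\rho^{-1}k$, one has $gg'=\Gamma Phh'P\Gamma'+\rho^{-1}\Gamma Phk'+\rho^{-1}kh'P\Gamma'+\rho^{-2}kk'$, and then $-\Gamma Phh'P\Gamma'+gg'=\rho^{-1}\Gamma Phk'+\rho^{-1}kh'P\Gamma'+\rho^{-2}kk'$, which is precisely the collection of cross/quadratic terms that should emerge from \eqref{Riccati}. Matching these verifies that \eqref{Riccati} and \eqref{CEEprel} are literally the same equation.

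Having shown the two equations coincide as polynomial identities in $P$, the statement about the \emph{minimal} solution follows immediately: the solution set is identical, so the minimal element (in the usual partial order on symmetric matrices, which exists for \eqref{Riccati} by standard algebraic Riccati theory as invoked around \eqref{rhok}--\eqref{Riccati}) is the same object. I would state this last point briefly rather than belabor it.

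The main obstacle I anticipate is purely bookkeeping: keeping track of the scalar factors $\rho$, $\rho^{-1}$, $\rho^2=1-h'Ph$, and $(1-h'Ph)^{-1}$ correctly while expanding, and making sure the substitution $g-FPh$ simplifies to the claimed multiple of $\sigma-a$ without an extraneous factor. There is no conceptual difficulty beyond what Lemmas~\ref{lem:ab2g} and~\ref{lem:P2g} already provide; the risk is an algebra slip, so I would organize the computation around the single identity $F=\Gamma+\rho^{-1}kh'$ together with $g-FPh=\rho\,k\,\cdot(\text{scalar})$ and push everything through in terms of $\Gamma$, $Ph$, and $k$ only.
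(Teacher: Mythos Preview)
Your computational route is exactly the paper's: write $F=\Gamma+\rho^{-1}kh'$, expand $FPF'$, recognize the quadratic term of \eqref{Riccati} as $kk'$, and then use $g=\Gamma Ph+\rho^{-1}k$ to repackage the cross terms as $gg'-\Gamma Phh'P\Gamma'$. The paper organizes the same identities in the same order; your expansion of $gg'$ and the matching of terms are correct. (The small slip in your preliminary computation of $g-FPh$---you carry an extra $\rho^{-1}$, getting $(1-\rho^{-2}h'Ph)$ instead of $(1-h'Ph)$---is harmless, since your main argument does not actually use that expression.)

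One point to tighten: your claim that \eqref{Riccati} and \eqref{CEEprel} ``coincide as polynomial identities in $P$'' and therefore have identical solution sets is not right. The substitution $g=\Gamma Ph+\rho^{-1}k$ (equivalently \eqref{P2g}) is not an identity valid for all symmetric $P$; Lemma~\ref{lem:P2g} derives it from the realization of the \emph{minimum-phase} spectral factor, hence specifically for the minimal solution of \eqref{Riccati}. So what the algebra actually establishes is that the minimal solution of \eqref{Riccati} satisfies \eqref{CEEprel}. The paper's proof stops at exactly this point as well and does not separately argue minimality within \eqref{CEEprel}, so your argument matches the paper's in scope; just drop the ``identical solution sets'' framing.
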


\begin{proof}
 Note that \eqref{Riccati} can be written
\begin{displaymath}
P=(\Gamma + \rho^{-1}kh')P(\Gamma +\rho^{-1}kh')'+kk',
\end{displaymath}
from which we have
\begin{displaymath}
\begin{split}
P-\Gamma P\Gamma' &= \rho^{-2}kk' +\rho^{-1}\Gamma Phk' +\rho^{-1}kh'P\Gamma'\\
&=(\Gamma Ph+\rho^{-1}k)(\Gamma Ph+\rho^{-1}k)' -\Gamma Phh'P\Gamma' ,
\end{split}
\end{displaymath}
which, in view of  \eqref{gk}, in turn yields \eqref{CEEprel}.
\end{proof}

\subsection{Interpolation condition}

Next we introduce the interpolation condition that $f(z)$ matches the first $n+1$ covariance lags $c_0,c_1,\dots,c_n$. To this end, we identify coefficients of nonnegative powers in $b(z)=2f(z)a(z)$ with $f(z)$ given by \eqref{fexpansion}. This yields 
\begin{equation}
\label{interpolation}
b=2c+(2C-I)a,
\end{equation}
where
\begin{equation}
\label{C}
C=\begin{bmatrix}1&&&&\\c_1&1&&&\\c_2&c_1&1&&\\ \vdots&\vdots&\vdots&\ddots&\\c_{n-1}&c_{n-2}&c_{n-3}&\cdots&1\end{bmatrix}.
\end{equation}
However, by Lemma~\ref{lem:ab2g}, $b=a+2g$, which inserted into \eqref{interpolation} yields $g+a=c+Ca$. Combining this with \eqref{P2g}  we have $a=C^{-1}(\Gamma Ph+\sigma-c)$, and hence \eqref{P2g} can be written
\begin{equation}
\label{gC}
g=C^{-1}c + (I-C^{-1})(\Gamma Ph+\sigma).
\end{equation}
Note that here $C$ is an $n\times n$ matrix and not a $d\times d$ matrix with $d$ being the algebraic degree of $c_0,c_1,\dots,c_n$ as in \eqref{c2b}. By Theorem~\ref{thm:posdeg}, $n$ is an upper bound of the positive degree.

Finally we show that \eqref{gC} is equivalent to \eqref{g(P)}. To this end, we first identify negative powers of $z$ in 
\begin{displaymath}
(1+c_1z^{-1}+\dots +c_n z^{-n})(1-u_1z^{-1}-u_2z^{-2}-\dots)=1,
\end{displaymath}
obtained from \eqref{expansion}, to obtain
\begin{displaymath}
c_k=u_k+\sum_{j=1}^{k-1}c_{k-j}u_j, \quad k=1,2,\dots,n,
\end{displaymath}
from which we have
\begin{equation}
\label{C2U}
Cu=c, \qquad C(I-U)=I .
\end{equation}
Consequently, $C^{-1}c=u$ and $I-C^{-1}=U$, so  \eqref{g(P)} follows from \eqref{g(P)}.

\section{GENERALIZATION TO ANALYTIC INTERPOLATION}\label{sec:NP}

Next we  show that the nonstandard Riccati equation \eqref{CEE} is universal in the sense that it holds for more general analytic interpolation problems by merely redefining the parameters $u$ and $U$. We shall demonstrate this for a Nevanlinna-Pick interpolation problem with rationality constraints. 

Given distinct point $z_0,z_1,\dots,z_n$ in the complement of the unit disc of the complex plane and points $c_0,c_1,\dots,c_n$ in the open right half-plane, find a rational positive real function $f(z)$ of degree at most $n$ satisfying the interpolation condition
\begin{equation}
\label{NP}
f(z_k)=c_k, \quad k=0,1,\dots,n.
\end{equation}
It is convenient to chose the points in conjugate pairs to ensure that $f$ is a real function. 

Clearly all calculations in Section~\ref{sec:newA} remain intact, and it on remains to enforce the interpolation condition, which we may write as 
\begin{displaymath}
b(z_k)=\frac12 c_ka(z_k) 
\end{displaymath}
or, equivalently, as
\begin{subequations}
\begin{equation}
\label{interpolationNP}
V\begin{bmatrix}1\\b\end{bmatrix}=\frac12 CV\begin{bmatrix}1\\a\end{bmatrix},
\end{equation}
where $V$ is the Vandermonde matrix
\begin{equation}
\label{Vandermonde}
V=\begin{bmatrix}z_0^n&z_0^{n-1}&\cdots&1\\z_1^n&z_1^{n-1}&\cdots&1\\\vdots&\vdots&&\vdots\\z_n^n&z_n^{n-1}&\cdots&1
\end{bmatrix}
\end{equation}
and $C$ is now the diagonal matrix
\begin{equation}
\label{Cdiag}
C=\begin{bmatrix}
c_0&&&\\&c_1&&\\&&\ddots&\\&&&c_n\end{bmatrix}.
\end{equation}
\end{subequations}
Since the points $z_0,z_1,\dots,z_n$ are distinct, the Vandermonde matrix \eqref{Vandermonde} is nonsingular, and hence we have
\begin{displaymath}
\begin{bmatrix}1\\b\end{bmatrix}=\frac12 V^{-1}CV\begin{bmatrix}1\\a\end{bmatrix}
\end{displaymath}
Therefore, by Lemma~\ref{lem:ab2g}, 
\begin{subequations}
\begin{equation}
\label{a2gNP}
\begin{bmatrix}0\\g\end{bmatrix}=T\begin{bmatrix}1\\a\end{bmatrix},
\end{equation}
where
\begin{equation}
\label{T}
T=\frac12\left[\frac12 V^{-1}CV-I\right].
\end{equation}
\end{subequations}
Consequently, in view of \eqref{P2g}, 
\begin{displaymath}
(I+T)\begin{bmatrix}0\\g\end{bmatrix}=T\begin{bmatrix}1\\ \Gamma Ph+\sigma\end{bmatrix}.
\end{displaymath}
Assuming that $I+T$ is nonsingular, for the moment as a technical condition to be more carefully investigated in the future, we define
\begin{equation}
\label{uUNP}
\begin{bmatrix}u&U\end{bmatrix}:=\begin{bmatrix}0&I_n\end{bmatrix}(I_{n+1}+T)^{-1}T,
\end{equation}
where for clarity we have added an index to each identity matrix to indicate dimension. Then 
\begin{equation}
\label{g2}
g=u+U\sigma +U\Gamma Ph,
\end{equation}
which has the same form as \eqref{g(P)}. In combination with \eqref{P2g} and \eqref{rhok}, this also yields
\begin{equation}
\label{arho2}
\begin{split}
&a=(I-U)(\Gamma Ph+\sigma)-u,\\
&\rho=\sqrt{1-h'Ph} ,
\end{split}
\end{equation} 
which is of the same form as \eqref{arho}.

We have thus demonstrated that the Covariance extension equation can be used also in this case after changing the definition of the interpolation parameters $(u,U)$. Hence the algorithms using homotopy continuation presented in \cite{BFL,BFLcdc05} could also be used here.

\section{CONCLUSIONS}\label{sec:conclusions}

In this paper we provide a new derivation of a nonstandard Riccati equation (CEE) for rational covariance extension that separates the part that only depends on the positivity and rationality constraints and the part that depends on the interpolation condition. In this way we see that the structure of CEE remains intact, and only certain interpolation parameters need to be modified when treating more general analytic interpolation problems with rationality constraints. 

It should be possible to generalize this framework to the MIMO case. This, together with some remaining numerical and technical issues, will the topic of a future paper.

\addtolength{\textheight}{-12cm}   



\section*{APPENDIX}

For ease of reference, we here reproduce the well-known {\em matrix inversion lemma}. 
Provided all inverses exist, the formula 
\begin{equation}
\label{app2:matrixinverstionlemma}
(A+BCD)^{-1}=A^{-1}-A^{-1}B(DA^{-1}B+C^{-1})^{-1}DA^{-1}
\end{equation}
holds for otherwise arbitrary matrices of compatible dimensions. This is seen by direct computation.



\begin{thebibliography}{99}

\bibitem{LPbook} A. Lindquist and G. Picci, Linear Stochastic Systems: A Geometric Approach to Modeling, Estimation and Identification. Berlin, Heidelberg: Springer-Verlag, 2015.

\bibitem{BLpartial97} C. I. Byrnes and A. Lindquist, On the partial stochastic realization problem, IEEE Trans. Automatic Control, vol. 42, Aug. 1997, pp. 1049-1070.

\bibitem{Kalmanbook} R. E. Kalman, P. L. Falb and M. A. Arbib, Topics in Mathematical Systems Theory, McGraw-Hill, 1969.

\bibitem{Aoki} M. Aoki, State Space Modeling of Time Series, Springer-Verlag, 1987.

\bibitem{OverscheeDeMoor93}
P. Van Overschee and B. De Moor, Subspace algorithms for stochastic identification problem, Automatica, vol. 3, 1993, pp. 649-660.

\bibitem{OverscheeDeMoor96} P. Van Overschee and B. De Moor, Subspace Identification for Linear Systems: Theory Implementation Applications, Kluwer Academic Publishers, 1996.

\bibitem{LP96}  A. Lindquist and G. Picci, Canonical correlation analysis, approximate covariance extension, and identification of stationary time series, Automatica, vol. 32, no. 5, 1996, pp. 709-733.

\bibitem{KalmanToeplitz} R. E. Kalman, Realization of covariance sequences, Proc. Toeplitz Memorial Conference, Tel Aviv, Israel, 1981.

\bibitem{GraggL} W. B. Gragg and A. Lindquist, On the partial realization problem, Linear Algebra and Applications, vol. 50, 1983, pp. 277-319.

\bibitem{GeorgiouThesis} T. T. Georgiou, Partial Realization of Covariance Sequences, Center for Mathematical Systems Theory, University of {F}lorida, 1983.

\bibitem{Georgiou87}  T. T. Georgiou, Realization of power spectra from partial covariance sequences, IEEE Transactions on Acoustics, Speech and Signal Processing, vol. 35, no. 4, 1987, pp. 438-449. 

\bibitem{BLGM} C. I. Byrnes, A. Lindquist and S. V. Gusev and A. V. Matveev, A complete parametrization of all positive rational extensions of a covariance sequence, IEEE Trans. Automatic Control, vol. 40, 1995, pp. 1841-1857.
       
\bibitem{BGuL} C. I. Byrnes, S. V. Gusev and A. Lindquist,  A convex optimization approach to the rational
covariance extension problem, SIAM Journal on Control and Optimization, vol. 37, 1999, pp. 211-229.
     
\bibitem{SIGEST} C. I. Byrnes, S. V. Gusev and A. Lindquist, From finite covariance windows to modeling filters: A convex
optimization approach, SIAM Review, vol. 43, N0. 4, Dec. 2001, 645--675. 

\bibitem{BEL} C. I. Byrnes,  Enqvist, P. and A. Lindquist, Identifiability and well-posedness of shaping-filter parameterizations: A global analysis approach, SIAM Journal on Control and Optimization, vol. 41, no. 1, 2002, pp. 23-59.

\bibitem{DLM98} A. Dahl\'{e}n, A. Lindquist and J. Mari, Experimental evidence showing that stochastic subspace identification methods may fail, Systems and Control Letters, vol. 34,  1998, pp. 303-312.

\bibitem{BFL} C. I. Byrnes, G. Fanizza and A. Lindquist, Homotopy continuation solution of the covariance extension equation,   in New Directions and Applications in Control Theory,  Springer Verlag, 2005, pp. 27-42.

\bibitem{BFLcdc05} C. I. Byrnes, G. Fanizza and A. Lindquist, The covariance extension equation revisited, Proc. 44th IEEE Conf. Decision and Control and the European Control Conference 2005, Seville, Spain, Dec. 12-15, 2005, pp. 7924-7930.

\bibitem{Kalmanprivate} R. E. Kalman, private communication, 1972. 



\end{thebibliography}
\end{document}